\numberwithin{equation}{section}
\theoremstyle{plain}
\newtheorem{theorem}{Theorem}[section]
\newtheorem{lemma}{Lemma}[section]
\newtheorem{corollary}{Corollary}[section]
\newcommand{\XX}{{\mathcal{X}}}
\newcommand{\spechtj}[2]{{S^{(m-#1, #2)(m-#1, #1)}}}
\newcommand{\specht}[1]{{S^{(m-#1, #1)}}}
\newcommand{\spechtd}[1]{{D^{(m-#1, #1)}}}
\newcommand{\Mperm}[1]{{M^{(m-#1, #1)}}}
\newcommand{\allone}{{\mathbf{1}}}
\DeclareMathOperator{\ch}{char}
\DeclareMathOperator{\im}{Im}
\DeclareMathOperator{\GL}{GL}
\title[$1$-intersection ranks of $2$-subsets vs. $n$-subsets]{A representation-theoretic computation of the rank of $1$-intersection incidence matrices: $2$-subsets vs. $n$-subsets.}
\author[Ducey]{Joshua E. Ducey}
\author[Sherwood]{Colby J. Sherwood}
\address{Dept.\ of Mathematics and Statistics, James Madison University, Harrisonburg, VA 22807, USA}
\email{duceyje@jmu.edu}
\email{sherwocj@dukes.jmu.edu}
\keywords{incidence matrix, Smith normal form, p-rank, representation theory}
\subjclass[2020]{05E18,20C30}
\begin{document}
\begin{abstract}
    Let $W_{k,n}^{i}(m)$ denote a matrix with rows and columns indexed by the $k$-subsets and $n$-subsets, respectively, of an $m$-element set.  The row $S$, column $T$ entry of $W_{k,n}^{i}(m)$ is $1$ if $|S \cap T| = i$, and is $0$ otherwise.  We compute the rank of the matrix $W_{2,n}^{1}(m)$ over any field by making use of the representation theory of the symmetric group.  We also give a simple condition under which $W_{k,n}^{i}(m)$ has large $p$-rank.
\end{abstract}
\maketitle

\section{Introduction}
Incidence matrices are interesting objects that encode a relation between two finite sets into a zero-one matrix that can then be studied algebraically.  Properties of the matrix that are unchanged by the different ways the matrix can be constructed become properties of the relation itself.  One example of such an invariant is the rank of the matrix, or more generally the elementary divisors of the matrix.  In the case that the two sets are the same and the relation describes adjacency of vertices in a graph, the spectrum of the matrix is another invariant.  In this paper we will be interested in the rank.  Note that the incidence matrix can be defined over any field, and the answer can depend on the field's characteristic.

Given nonnegative integers $k \leq n \leq m$ and a fixed set $\XX$ of size $m$, one can define an $\binom{m}{k} \times \binom{m}{n}$ incidence matrix $W_{k,n}^{i}(m)$ as follows.  Let the rows of $W_{k,n}^{i}(m)$ be indexed by the $k$-element subsets ($k$-subsets, for short) of $\XX$, the columns be indexed by the $n$-subsets of $\XX$, and let the row $S$, column $T$ entry of $W_{k,n}^{i}(m)$ be $1$ if $|S \cap T| = i$, and $0$ otherwise.  These subset-intersection matrices describe fundamental relations and are naturally interesting.

Suppose for a moment that we have $i = k$; the matrix $W_{k,n}^{k}(m)$ is the well-studied \textit{inclusion matrix} of $k$-subsets vs.\ $n$-subsets. Mathematicians have been interested in this matrix since the 1960s, when it was shown that $W_{k,n}^{k}(m)$ has full rank over the rational numbers \cite{gottlieb}.  Later, the rank of $W_{k,n}^{k}(m)$ was calculated over any field of characteristic $2$ (the $2$-rank, for short) and the $3$-rank was computed when $n=k+1$ \cite{LR}.  The problem was completely solved when Wilson \cite{wilson:diagonal} found a beautiful diagonal form for the inclusion matrices (the $p$-rank is the number of diagonal entries of Wilson's form not divisible by $p$).  Further refinements to Wilson's result were made in \cite{frankl} and \cite{bier}.  

The study of inclusion matrices has applications to the theory of designs.  For much information and more history of these matrices see \cite[Section 10]{qing} and \cite{plaza}.  For a recent application to computing integer invariants of the $n$-cube graph, see \cite{CXS:cube}.

We now switch our attention to the situation where $i < k$.  When $i=0$ the matrices $W_{k,n}^{0}(m)$ describe the \textit{disjointness} relation.  However note that a $k$-subset $S$ being disjoint from an $n$-subset $T$ happens precisely when $S \subseteq \XX \backslash T$.  So we have $W_{k,n}^{0}(m) = W_{k,m-n}^{k}(m)$ and thus the ranks of the disjointness matrices are also known.

When $0<i<k$ much less is known.  In \cite[Example SNF3]{eijl} the authors solve the rank problem for the matrix $W_{2,2}^{1}(m)$ by finding a diagonal form (the Smith normal form in that case).  The matrix $W_{2,2}^{1}(m)$ can also be thought of as an adjacency matrix of the triangular graph $T(m)$, and if one instead considers the Laplacian matrix the rank problem has been solved for both $T(m)$ \cite{line} and its complement (the Kneser graph on $2$-subsets) \cite{kneser2}.  

In \cite{wong}, a diagonal form is found for a very general class of matrices $N_{2}(G)$.  The rows of $N_{2}(G)$ are indexed by the $2$-subsets of a set of size $m$, the first column is the characteristic vector of the edges of a graph $G$ on $m$ vertices, and the rest of the columns come from the action of the symmetric group on this first column.  When $G$ is the complete bipartite graph $K_{n,m-n}$ the matrix $N_{2}(G)$ becomes $W_{2,n}^{1}(m)$, and from the result \cite[Theorem 17]{wong} the rank of $W_{2,n}^{1}(m)$ follows.  

The purpose of this paper is to give an alternative computation of the rank of $W_{2,n}^{1}(m)$ by using the representation theory of the symmetric group $G_{m}$.  There are several reasons why one would want to do this.

Representation theory has already been shown to be a powerful tool for problems such as computing $p$-ranks and Smith normal forms.  Some successes in this respect, including the $q$-analogue of this problem (that is, concerning various intersection relations of subspaces of a vector space and the representation theory of $\GL(n,q)$) include \cite{1spaces, skew, frumkin, kneser2, jolliffe, raghu}.  Whether intentionally or not, in the various statements and hypotheses of theorems in the works on subsets one can find standard Young tableau, proper partitions, and other reflections of the representation theory of $G_{m}$ present.  There has been particular interest in recasting the impressive matrix methods (fronts, shadows, etc.) of Wilson and Wong \cite{wilson:diagonal, wong} in the hope that some light might be shed on other incidence problems of subsets.  The recent paper \cite{jolliffe} makes progress on this for the inclusion/disjointness matrices $W_{k,n}^{0}(m)$; we now give attention to the $1$-intersection matrices of $2$-subsets vs. $n$-subsets.

Furthermore, representation theory provides an organized setting to frame other related problems.  Incidence matrices of subset inclusion, disjointness, intersection in a fixed size; the Laplacian matrices (and others) of Kneser-type graphs; these all represent $FG_{m}$-module homomorphisms, and understanding of ranks (or Smith normal forms) of these matrices can come from a sufficient understanding of the module structure of the domain and codomain.  In particular, the incidence problems concerning $3$-subsets of a set seem to have not been handled before.  Although the complexity of the modules will increase, the manner of investigation remains the same.

The papers \cite{frumkin, jolliffe, kneser2} make use of the modular representation theory techniques expounded by James \cite{james}.  This is the approach we will take here.  The next section will review the basic ideas that we need.  Section \ref{sec:calc} will provide some useful calculations that will be used repeatedly, and in Section \ref{sec:cases} we state and prove our theorem.  In the final Section \ref{sec:conclude} we give a lemma that may be used to investigate   the general  subset-intersection matrix $W_{k,n}^{i}(m)$.  We also identify a simple condition that forces this matrix to have large rank.

\section{Representation Theory of Symmetric Groups}
Suppose $\lambda = (\lambda_1, \lambda_2, \lambda_3,...)$ is a partition of $m$. That is, $\lambda_1 \geq \lambda_2 \geq \lambda_3 \geq ...\geq 0$ and $\sum_k \lambda_k = m$. A $\lambda$-tableau is an array of the integers from $1$ to $m$ without repeats where the $k$-th row has $\lambda_k$ entries. For example, if $\lambda = (3,2,1)$, the following are $\lambda$-tableaux.

\vspace{.2 in}
\begin{center}
\begin{ytableau}
1 & 2 & 3 \\
4 & 5 \\
6
\end{ytableau}
\hspace{.2 in}
\begin{ytableau}
4 & 2 & 3 \\
6 & 1 \\
5
\end{ytableau}
\hspace{.2 in}
\begin{ytableau}
6 & 1 & 5 \\
2 & 4 \\
3
\end{ytableau}
\end{center}
\vspace{.2 in}

For a given $\lambda$-tableau, $t$, we define its \textit{row stabilizer}, $R_t$ to be the set of $\sigma \in G_m$, where $G_m$ is the symmetric group on $m$ elements, that keep the rows of $t$ fixed set-wise. We now define the tabloid, $\{t\}$, to be the equivalence class of $t$ under the relation $t_1 \sim t_2$ if  $t_1 = \sigma t_2$ for some $\sigma \in R_{t_1}$. We now as in \cite{jolliffe} define the $j$-\textit{column stabilizer} of $t$, denoted $C_t^j$, to be the set of permutations that fix each of the first $j$ columns of $t$ set-wise, and fix the remaining symbols in $t$ point-wise.  Note that when $j \geq \lambda_2$, the $j$-column stabilizer fixes all columns set-wise; it is then called the \textit{column stabilizer} of $t$ and is denoted $C_t$.

Now fix a field $F$ and consider the group algebra $FG_{m}$.  Let $M^{\lambda}$ denote the $FG_{m}$-permutation module with basis the $\lambda$-tabloids.  
Define the $j$-\textit{polytabloid} for tableau $t$, denoted $e_t^j$, as follows:
$$e_t^j = \kappa_t^j\{t\}$$ where
$$\kappa_t^j = \sum_{\sigma \in C_t^j} (-1)^{\sigma}\sigma$$
and 
$(-1)^{\sigma}$ is $1$ or $-1$ when $\sigma$ is even or odd, respectively.

When $C_t^j = C_t$, we call a $j$-polytabloid $e_{t}^{j}$  a \textit{polytabloid} and denote it by $e_{t}$. For example, let us consider the following tableau for the partition $\lambda=(4,2)$.

\vspace{.2 in}
$t = $
\begin{ytableau}
1 & 2 & 3 & 4 \\
5 & 6 \\
\end{ytableau}

\vspace{.2 in}
$\{t\} =$
\ytableausetup
{boxsize=normal,tabloids}
\ytableaushort{
1234, 56
}
$=$
\ytableaushort{
2134, 65
}
$=$
\ytableaushort{
1243, 56
}
\vspace{.2 in}

$e_t^0 = \{t\}$

\vspace{.2 in}
$e_t^1 =$
\ytableaushort{
1234, 56
}
$-$
\ytableaushort{
5234, 16
}

\vspace{.2 in}
$e_t^2 = e_t =$
\ytableaushort{
1234, 56
}
$-$
\ytableaushort{
5234, 16
}
$-$
\ytableaushort{
1634, 52
}
$+$
\ytableaushort{
5634, 12
}

\vspace{.2 in}

It is important to realize that the $j$-polytabloid for $t$ depends on the tableau $t$, not the tabloid $\{t\}$.


For our purposes we will restrict our attention to partitions of the form $\lambda = (m-i, i)$ where $i \leq m-i$. Any $(m-i,i)$-tabloid is determined by the $i$-subset of $\XX = \{1, 2, \cdots, m\}$ in its second row, and for convenience of notation we will often identify the two. 

Let $S^{(m-i, j)(m-i,i)}$ be the submodule of $M^{(m-i,i)}$ spanned by $j$-polytabloids. Notice that $\specht{i} \coloneqq S^{(m-i,i)(m-i, i)}$ is the span of polytabloids; this is the \textit{Specht module} for the partition $(m-i,i)$.   We have the following filtration of $\Mperm{i}$:
\[
M^{(m-i,i)} \supseteq S^{(m-i,1)(m-i,i)} \supseteq S^{(m-i,2)(m-i,i)} \supseteq \cdots \supseteq S^{(m-i,i)(m-i,i)}=S^{(m-i,i)}\supseteq 0.
\]
The reader can consult \cite{james}, from which our notation is taken, for the general theory.  It can also be  shown  (\cite[Chapter 17]{james}) that successive quotients of this submodule chain are isomorphic to Specht modules. That is, 
\begin{equation}\label{eqn:spechtseries}
S^{(m-i, j)(m-i,i)}/S^{(m-i,j+1)(m-i,i)} \cong S^{(m-j, j)}.
\end{equation}

The Specht modules above can be defined over any field $F$.  When $\ch(F) = 0$ they are irreducible, but this is not always the case when $F$ has positive characteristic.  When the indexing partition $\lambda = (m-i,i)$ has two parts and $2i < m$, it turns out that the Specht module $\specht{i}$ has a unique maximal submodule with simple quotient denoted $\spechtd{i}$.  Furthermore, the only other possible composition factors of $\specht{i}$ are $\spechtd{j} (0 \leq j < i)$, and these occur with multiplicity zero or one depending on both the characteristic $p$ and $m$.  We collect this important information below, which is a special case of Theorem 24.15 in \cite{james}.

\begin{theorem}[\cite{james}, Theorem 24.15]\label{thm:james}
Let $F$ be a field with $\ch(F) = p$.  Suppose $m>2i$ and let $\spechtd{i}$ denote the simple head of the Specht module $\specht{i}$ defined over $F$.  Let $[\specht{i}:\spechtd{j}]$ denote the multiplicity of $\spechtd{j}$ as a composition factor of $\specht{i}$.  Then
\begin{enumerate}
    \item $S^{(m)} = D^{(m)}$.
    \item $[\specht{1} : D^{(m)}] = 1$ if $m \equiv 0 \pmod{p}$, $0$ otherwise.
    \item $[\specht{2}:\spechtd{1}] = 1$ if $m \equiv 2 \pmod{p}$, $0$ otherwise.
    \item Let $p>2$.  Then $[\specht{2} : D^{(m)}] = 1$ if $m \equiv 1 \pmod{p}$, $0$ otherwise.
    \item Let $p=2$.  Then $[\specht{2} : D^{(m)}] = 1$ if $m \equiv 1 \mbox{ or } 2 \pmod{4}$, $0$ otherwise.
\end{enumerate}
\end{theorem}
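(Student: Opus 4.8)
The plan is to obtain the theorem as the specialization of \cite[Theorem~24.15]{james} to the case $i \le 2$, together with a routine translation of the $p$-adic digit condition in that theorem into the elementary congruences stated above.

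Items (1) and (2) are quick to check directly. For (1), $S^{(m)}$ is the trivial $FG_m$-module, which is one-dimensional, hence simple, hence equal to its own head $D^{(m)}$. For (2), identify $\Mperm{1}$ with the permutation module on the $m$ points of $\XX$ with basis $\mathbf{e}_1,\dots,\mathbf{e}_m$; then $\specht{1}$ is the ``sum-zero'' submodule spanned by the differences $\mathbf{e}_a-\mathbf{e}_b$. The $G_m$-fixed vector $\allone=\sum_a\mathbf{e}_a$ lies in $\specht{1}$ precisely when $m\equiv 0\pmod p$, in which case $F\allone$ is a trivial submodule and $\specht{1}/F\allone\cong\spechtd{1}$; otherwise $\specht{1}$ is already simple. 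Either way $[\specht{1}:D^{(m)}]=1$ iff $p\mid m$, which agrees with what \cite[Theorem~24.15]{james} returns for $(i,j)=(1,0)$.

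For items (3)--(5) I would quote \cite[Theorem~24.15]{james} as it stands. For two-row partitions $(m-i,i)$ and $(m-j,j)$ with $0\le j\le i$ and $m>2i$, that result gives $[\specht{i}:\spechtd{j}]\in\{0,1\}$ and determines the value from the base-$p$ expansions of $i-j$ and of $m-i-j+1$; the diagonal pair $j=i$ always contributes multiplicity one, since $m>2i$ forces $(m-i,i)$ to be a $p$-regular partition with two distinct parts, so $\spechtd{i}$ is the genuine simple head of $\specht{i}$. One then runs the criterion on the two off-diagonal pairs with $i=2$: for $(i,j)=(2,1)$ it collapses to $m-2\equiv 0\pmod p$, giving (3); for $(i,j)=(2,0)$ it reads $m\equiv 1\pmod p$ when $p$ is odd, giving (4), but --- because for $p=2$ the relevant $2$-adic carries must be followed one digit further --- it refines to $m\equiv 1$ or $2\pmod 4$, giving (5).

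The only real obstacle is bookkeeping: one must state \cite[Theorem~24.15]{james} in a form transparent enough that the $i\le 2$ specialization is immediate, and then verify carefully that its combinatorial condition produces exactly the congruences listed, with no spurious residues slipping in (for example, that $m\equiv 0\pmod p$ does \emph{not} occur in item (4)) and with the characteristic-$2$ behaviour in item (5) genuinely differing from the odd-characteristic case. It is also worth recording that the hypothesis $m>2i$ is precisely what makes $\spechtd{i}$ well defined and the Specht filtration~\eqref{eqn:spechtseries} available, so that the specialization is legitimate.
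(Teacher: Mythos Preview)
Your proposal is correct and matches the paper's treatment: the paper does not give an independent proof of this theorem at all, but simply records it as the specialization of \cite[Theorem~24.15]{james} to two-row partitions with $i\le 2$. Your write-up in fact goes further than the paper by supplying the direct arguments for items~(1) and~(2) and by sketching how the $p$-adic digit criterion in James collapses to the stated congruences for items~(3)--(5); this is all sound and is exactly the verification the paper leaves to the reader.
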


\section{Some calculations}\label{sec:calc}

Let $0 \leq k \leq n \leq \frac{1}{2}m$ and consider the map $\rho_{n,k} \colon M^{(m-n, n)} \rightarrow M^{(m-k, k)}$ that sends an $(m-n,n)$-tabloid $\{t\}$ to the sum of the $(m-k,k)$-tabloids that each have in their second row exactly one symbol in common with the second row of $\{t\}$.  In other words, an $n$-subset maps to the sum of the $k$-subsets that intersect it in a set of size $1$; we may consider this to be the map defined by the $1$-intersection matrix $W_{k,n}^{1}(m)$.  Fixing a field $F$, the domain and codomain are $FG_{m}$-permutation modules and we will compute the rank of this $FG_{m}$-homomorphism when $k=2$.  

When a $2$-subset intersects an $n$-subset in a set of size $1$, it also intersects the complement of the $n$-subset in a set of size $1$.  Thus we have $W_{2,n}^{1}(m) = W_{2,m-n}^{1}(m)$, and so we lose  no generality by our assumption that $n \leq \frac{1}{2}m$.  We will make this assumption throughout the paper.

Let  $\psi_{k,j} \colon M^{(m-k, k)} \rightarrow M^{(m-j, j)}$ denote the \textit{inclusion} map; that is, the map that sends an $(m-k,k)$-tabloid $\{s\}$ to the sum of the $(m-j,j)$-tabloids that each have as their second row a subset of the second row of $\{s\}$.  The two lemmas below collect some useful calculations that will be used repeatedly.  In Lemma \ref{lem:maps3} we give a more general statement of the lemma below that applies to all the incidence matrices $W_{k,n}^{i}(m)$; however, for clarity, here we state and prove the special case related to the maps we are considering.

We again remind the reader that for notational convenience we often identify an $(m-k,k)$-tabloid with the $k$-subset of elements in its second row.
\begin{lemma}\label{lem:maps1}
Suppose $2 \leq n \leq \frac{1}{2}m$.  Let $t = 
\ytableausetup{smalltableaux, notabloids}
\begin{ytableau}
b & d &  \cdots & & \cdots & \\
a & c &  \cdots & 
\end{ytableau}$ be an $(m-n,n)$-tableau.
\begin{enumerate}
    \item \label{eqns:map0} 
    $e_{t}^{0} = \{t\} \xmapsto{\rho_{n,2}} \sum_{|\{s\}\cap \{t\}|=1} \{s\} \xmapsto{\psi_{2,0}} n(m-n)\emptyset$.
    \item 
    $e_t^{1} \xmapsto{\rho_{n,2}}
    \sum_{\{s\} \in \mathcal{A}} e_{s}^{1} + \sum_{\{s^{\prime}\} \in \mathcal{B}} e_{s^{\prime}}^{1} \xmapsto{\psi_{2,1}} (m-2n)e_{\begin{ytableau}
b &  & & & \cdots & \\
a 
\end{ytableau}}$
    , \\where $\mathcal{A}$ consists of the $2$-subsets containing $a$ but not $b$, $\mathcal{B}$ consists of the $2$-subsets containing $b$ but not $a$, and $s$ (resp. $s^{\prime}$) is a tableau representing such a $2$-subset with $b$ (resp. $a$) in the top-left position.
    \item $e_{t}^{2} \xmapsto{\rho_{n,2}} 2e_{\begin{ytableau}
b & c & & & \cdots & \\
a & d
\end{ytableau}}$.

\end{enumerate}
\end{lemma}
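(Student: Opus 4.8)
The plan is to prove all three parts by a single direct computation: expand each $j$-polytabloid as the signed sum over the small column stabilizer $C_t^j$, push each summand through $\rho_{n,2}$ (and, where the statement asks for it, through $\psi_{2,j}$ as well), and then read off which $2$-subsets survive the resulting alternating sum. Keep $t$ as displayed, so its first column is $\{a,b\}$ with $b$ on top and, in part~(3), its second column is $\{c,d\}$ with $d$ on top; let $R$ be the set of the $n-2$ symbols in the second row of $t$ different from $a$ and $c$, so that the tabloid $\{t\}$ is identified with the $n$-subset $R\cup\{a,c\}$. Part~(1) needs no work beyond the definitions: since $C_t^0$ is trivial, $e_t^0=\{t\}$, which $\rho_{n,2}$ sends to the sum of the $n(m-n)$ two-subsets meeting $R\cup\{a,c\}$ in exactly one symbol, and $\psi_{2,0}$ then sends each of these to the unique $0$-tabloid $\emptyset$.

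Part~(3) is the substantive case. Here $C_t^2=\langle(ab)\rangle\times\langle(cd)\rangle$, so
\[
e_t^2=\bigl(1-(ab)\bigr)\bigl(1-(cd)\bigr)\{t\}=\{t\}-(ab)\{t\}-(cd)\{t\}+(ab)(cd)\{t\},
\]
whose four summands are, under the tabloid/subset identification and with signs $+,-,-,+$, the $n$-subsets $R\cup\{a,c\}$, $R\cup\{b,c\}$, $R\cup\{a,d\}$, $R\cup\{b,d\}$. For the action of $\rho_{n,2}$ I would use the elementary identity
\[
\rho_{n,2}(U)=\sum_{x\in U}\chi_x-2\,\psi_{n,2}(U)\qquad(|U|=n),
\]
where $\chi_x$ denotes the sum of all $2$-subsets containing $x$; this is just the remark that a $2$-subset $V$ satisfies $[\,|V\cap U|=1\,]=|V\cap U|-2[\,|V\cap U|=2\,]$. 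Inserting the four $n$-subsets above, the $\sum_{x\in U}\chi_x$ contribution dies because every symbol of $\XX$ lies in a sign-balanced sub-collection of the four sets: each of $a,b,c,d$ in exactly two of them with opposite signs, every element of $R$ in all four with signs summing to zero, and everything else in none. In the $\psi_{n,2}$ contribution, writing $\psi_{n,2}\bigl(R\cup\{p,q\}\bigr)$ as the $2$-subsets lying in $R$, the pairs $\{r,p\}$ and $\{r,q\}$ with $r\in R$, and the single pair $\{p,q\}$, the first three blocks again cancel in the signed sum, leaving only $\{a,c\}-\{b,c\}-\{a,d\}+\{b,d\}$. Hence $\rho_{n,2}(e_t^2)=-2\bigl(\{a,c\}-\{b,c\}-\{a,d\}+\{b,d\}\bigr)$, and since the target tableau $s$ shares its first two columns $\{a,b\},\{c,d\}$ with $t$ we have $C_s=C_t^2$, so the same four-term expansion computes $e_s=\{a,d\}-\{b,d\}-\{a,c\}+\{b,c\}$; comparing, $\rho_{n,2}(e_t^2)=2e_s$.

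Part~(2) goes the same way, with the two-term expansion $e_t^1=\{t\}-(ab)\{t\}$ coming from $C_t^1=\langle(ab)\rangle$ and the inclusion map $\psi_{2,1}$ composed on afterwards; applying $\rho_{n,2}$ and the identity above, the surviving $2$-subsets organize into polytabloids of the form $e_s^1$ and $e_{s'}^1$, and $\psi_{2,1}$ then sends each such polytabloid to $\pm(\{a\}-\{b\})$ because the shared middle symbol cancels, so the scalar $m-2n$ comes out as a difference of two subset-counts. I do not expect a genuine obstacle anywhere; the only point requiring care is keeping the signs straight through the cancellations, and that is completely governed by the product structure of $C_t^j$ --- it is exactly this structure that makes every term drop out except the handful of ``mixed'' $2$-subsets $\{a,c\},\{a,d\},\{b,c\},\{b,d\}$ in part~(3). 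As an alternative to the identity, part~(3) can also be verified by a finite case split on the values $|V\cap\{a,b,c,d\}|$ and $|V\cap R|$ for a generic $2$-subset $V$: the net coefficient is $0$ unless $V\subseteq\{a,b,c,d\}$, and even among those six pairs the coefficients of $\{a,b\}$ and of $\{c,d\}$ vanish.
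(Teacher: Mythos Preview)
Your argument is correct; every cancellation you claim actually happens, and the final answers in all three parts agree with the lemma.  Part~(2) is only sketched, but filling it in with your identity gives
\[
\rho_{n,2}(e_t^1)=\chi_a-\chi_b-2\!\!\sum_{r\in R\cup\{c\}}\!\bigl(\{a,r\}-\{b,r\}\bigr)
=\!\!\sum_{x\notin R\cup\{a,b,c\}}\!\!e_s^1\;+\!\!\sum_{y\in R\cup\{c\}}\!\!e_{s'}^1,
\]
which under $\psi_{2,1}$ becomes $(m-n-1)-(n-1)=m-2n$ copies of $\{a\}-\{b\}$, exactly as needed.

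The paper organizes the same computation differently: rather than expanding $e_t^j$ first and applying $\rho_{n,2}$ term by term, it uses that $\rho_{n,2}$ is an $FG_m$-homomorphism to write $\rho_{n,2}(e_t^j)=\kappa_t^j\,\rho_{n,2}(\{t\})$ and then asks which $2$-subsets in the single sum $\sum_{|\{s\}\cap\{t\}|=1}\{s\}$ survive the antisymmetrizer $\kappa_t^j$.  For part~(3) this is especially clean: a $2$-subset survives $(1-(ab))(1-(cd))$ only if it meets each of $\{a,b\}$ and $\{c,d\}$ once, leaving just $\overline{ad}$ and $\overline{bc}$, and $\kappa_t^2$ turns each into the same polytabloid.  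Your route trades that structural shortcut for the pleasant identity $\rho_{n,2}(U)=\sum_{x\in U}\chi_x-2\psi_{n,2}(U)$, which makes the bookkeeping explicit and incidentally exhibits the relation between the $1$-intersection and inclusion maps; the paper's route is shorter because equivariance localizes the question to a single orbit sum.  Both are direct computations and neither uses any idea the other lacks.
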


\begin{proof}
For part $(1)$ of the lemma, notice that the inclusion map $\psi_{2,0}$ sends each $2$-subset to the empty set (that is, the tabloid with only one row).  The number of $2$-subsets sharing exactly one element with the $n$-subset $\{t\}$ is $n(m-n)$.  

For part $(2)$, we have 
\begin{equation}
\rho_{n,2}(e_{t}^{1}) = \kappa_{t}^{1}\rho_{n,2}(\{t\}) = \kappa_{t}^{1}\left ( \sum_{|\{s\}\cap \{t\}|=1} \{s\} \right )
\end{equation}
and since $\kappa_{t}^{1} = \left( 1 - (ab) \right)$, the only terms in this sum not killed by $\kappa_{t}^{1}$ are those $2$-subsets in $\mathcal{A}$ or $\mathcal{B}$.  If we represent such a $2$-subset with a tableau $s$ that has both $a$ and $b$ in the first column, then $\kappa_{t}^{1} = \kappa_{s}^{1}$ and so 
\begin{equation}\label{eqn:sum}
\rho_{n,2}(e_{t}^{1}) = \sum_{\{s\} \in \mathcal{A}} e_{s}^{1} + \sum_{\{s^{\prime}\} \in \mathcal{B}} e_{s^{\prime}}^{1}.
\end{equation}
Notice that if $s = \begin{ytableau}
b & c & & & \cdots & \\
a & d
\end{ytableau}$ then we have (omitting the first row of the tabloid):
\[
e_{s}^{1} = \overline{ad} - \overline{bd}
\]
and $\psi_{2,1}(e_{s}^{1}) = \overline{a} - \overline{b}= e_{\begin{ytableau}
b &  & & & \cdots & \\
a 
\end{ytableau}}$.  Thus each term in the first sum of Equation \ref{eqn:sum} is mapped by $\psi_{2,1}$ to $e_{\begin{ytableau}
b &  & & & \cdots & \\
a 
\end{ytableau}}$ and in the same way one sees that each term in the second sum maps to $e_{\begin{ytableau}
a &  & & & \cdots & \\
b 
\end{ytableau}}= -e_{\begin{ytableau}
b &  & & & \cdots & \\
a 
\end{ytableau}}.$  Part $(2)$ of the lemma now follows by counting the number of terms in each sum.

Finally, to prove part $(3)$ of the lemma, notice that $\kappa_{t}^{2} = (1-(ab))(1-(cd))$.  Therefore
\[
\rho_{n,2}(e_{t}^{2}) = (1-(ab))(1-(cd))\left ( \sum_{|\{s\}\cap \{t\}|=1} \{s\} \right ).
\]
One easily sees that the only terms in this sum not killed by $\kappa_{t}^{2}$ are $\overline{ad}$ and $\overline{bc}$.  Thus
\begin{align*}
\rho_{n,2}(e_{t}^{2}) &= \kappa_{t}^{2}\left( \overline{ad} + \overline{bc} \right)\\
&= e_{\begin{ytableau}
b & c & & & \cdots & \\
a & d
\end{ytableau}} + e_{\begin{ytableau}
a & d & & & \cdots & \\
b & c
\end{ytableau}}\\
&= 2e_{\begin{ytableau}
b & c & & & \cdots & \\
a & d
\end{ytableau}}
\end{align*}

\end{proof}
The lemma above gives information about the images under $\rho_{n,2}$ of $j$-polytabloids in $\Mperm{n}$.  The next lemma describes what the inclusion maps $\psi_{2,j}$ do to the image of $\rho_{n,2}$ in $\Mperm{2}$.  
\begin{lemma}\label{lem:maps2}
Let $\{t\}$ be an $n$-subset of $\XX$.  Set $Y = \rho_{n,2}(\{t\})$.  Note we may view $Y$ as the element of $\Mperm{2}$ represented by a column of $W_{2,n}^{1}(m)$.
\begin{enumerate}
    \item $\psi_{2,0}(Y) = n(m-n)\emptyset$.
    \item $\psi_{2,1}(Y) = (m-n)\sum_{\{z\} \subset \{t\}} \{z\} + n\sum_{\{z^{\prime}\} \not\subset \{t\}} \{z^{\prime}\}$.
\end{enumerate}
\end{lemma}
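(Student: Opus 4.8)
The plan is to expand $Y$ explicitly as a sum of $2$-subsets and then push it through the inclusion maps term by term, at which point everything reduces to an elementary counting argument. Recall that by definition $Y = \rho_{n,2}(\{t\}) = \sum_{|\{s\} \cap \{t\}| = 1} \{s\}$, where the sum is over all $2$-subsets $s$ of $\XX$ meeting the fixed $n$-subset $\{t\}$ in exactly one element. Each such $s$ is uniquely of the form $\{a,b\}$ with $a \in \{t\}$ and $b \in \XX \setminus \{t\}$, so $Y$ has exactly $n(m-n)$ terms.

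For part $(1)$ I would simply observe that $\psi_{2,0}$ sends every $2$-subset to the unique $(m,0)$-tabloid $\emptyset$. Hence $\psi_{2,0}(Y)$ equals $\emptyset$ scaled by the number of terms of $Y$, which is $n(m-n)$ by the count above.

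For part $(2)$ I would use that $\psi_{2,1}$ sends a $2$-subset $\{a,b\}$ to the formal sum $\{a\} + \{b\}$ of the two $1$-subsets contained in it. Applying this to each term of $Y$ and collecting like terms, the coefficient of a $1$-subset $\{z\}$ in $\psi_{2,1}(Y)$ is the number of $2$-subsets $s$ occurring in $Y$ with $z \in s$. If $z \in \{t\}$, such an $s$ must be $\{z,b\}$ with $b \in \XX \setminus \{t\}$, and there are $m-n$ choices for $b$; if $z \notin \{t\}$, such an $s$ must be $\{z,a\}$ with $a \in \{t\}$, and there are $n$ choices for $a$. Separating the sum over $1$-subsets inside $\{t\}$ from that over $1$-subsets outside $\{t\}$ then yields precisely $(m-n)\sum_{\{z\} \subset \{t\}} \{z\} + n\sum_{\{z^{\prime}\} \not\subset \{t\}} \{z^{\prime}\}$.

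There is no genuine obstacle here; the only point demanding a little care is keeping the two families of $1$-subsets apart in part $(2)$, since every $2$-subset appearing in $Y$ contributes exactly one $1$-subset inside $\{t\}$ and exactly one outside, and one must not conflate these contributions when reading off coefficients. This same bookkeeping, in a more intricate guise, will recur when computing ranks in Section \ref{sec:cases}.
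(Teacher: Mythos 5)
Your proof is correct and follows essentially the same route as the paper: the paper cites Lemma \ref{lem:maps1} for part $(1)$ and proves part $(2)$ by exactly your counting argument, namely that a $1$-subset inside $\{t\}$ lies in $m-n$ of the $2$-subsets appearing in $Y$ while a $1$-subset outside $\{t\}$ lies in $n$ of them. Your explicit expansion of $Y$ and the coefficient-collecting step merely spell out the same bookkeeping in slightly more detail.
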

\begin{proof}
Part (1) is just a restatement of part $(1)$ of Lemma \ref{lem:maps1}.  To see part $(2)$, notice that if a $1$-subset $\{z\}$ is contained in the $n$-subset $\{t\}$, then there are $(m-n)$ $2$-subsets that simultaneously contain $\{z\}$ and meet $\{t\}$ in a singleton (that singleton necessarily being $\{z\}$).  If $\{z\} \not\subset \{t\}$, then there are $n$ $2$-subsets that contain $\{z\}$ and meet $\{t\}$ in a singleton.
\end{proof}


One more result we need is a consequence of the well-known hook length formula \cite[Theorem 20.1]{james}. 
\begin{theorem}\label{thm:hook}
Over any field, the dimension of the Specht module $\specht{j}$ is 
\[
\binom{m}{j} - \binom{m}{j-1},
\]
where $\binom{m}{-1}=0$.
\end{theorem}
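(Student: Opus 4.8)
The plan is to invoke the hook length formula directly. It is a standard fact (see \cite{james}) that the standard polytabloids form a basis of $\specht{j}$ over any field, so $\dim \specht{j}$ does not depend on $F$ and equals the number $f^{(m-j,j)}$ of standard Young tableaux of shape $(m-j,j)$. By the hook length formula, $f^{(m-j,j)} = m!\big/\prod_{c} h(c)$, where $c$ ranges over the cells of the Young diagram of $(m-j,j)$ and $h(c)$ is the hook length at $c$. Thus the entire proof reduces to evaluating this product and checking that the result is $\binom{m}{j}-\binom{m}{j-1}$.

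Next I would record the hook lengths explicitly. Write $(r,c)$ for the cell in row $r$, column $c$, so $1\le c\le m-j$ in the first row and $1\le c\le j$ in the second. For a first-row cell $(1,c)$ with $c\le j$ the hook consists of the $m-j-c$ cells to its right, the cell $(2,c)$ directly below, and $(1,c)$ itself, giving $h(1,c)=m-j-c+2$; for $(1,c)$ with $c>j$ there is nothing below, so $h(1,c)=m-j-c+1$; and $h(2,c)=j-c+1$. Hence, as $c$ runs over the first row, the hook lengths run through $m-j+1,\,m-j,\,\dots,\,m-2j+2$ and then $m-2j,\,m-2j-1,\,\dots,\,1$ — that is, they are exactly the integers $1,2,\dots,m-j+1$ with the single value $m-2j+1$ omitted — while the second-row hook lengths are $j,\,j-1,\,\dots,\,1$. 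The degenerate cases $j=0$ and $m=2j$ are handled by the same bookkeeping, with one of the listed blocks empty. Multiplying gives $\prod_{c}h(c)=\frac{(m-j+1)!}{m-2j+1}\cdot j!$.

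Finally I would divide and simplify: $\dim\specht{j}=\frac{m!\,(m-2j+1)}{(m-j+1)!\;j!}$, and this equals $\binom{m}{j}-\binom{m}{j-1}$ by the elementary identity $\binom{m}{j}-\binom{m}{j-1}=\frac{m!}{(j-1)!\,(m-j)!}\left(\frac{1}{j}-\frac{1}{m-j+1}\right)=\frac{m!\,(m-2j+1)}{j!\,(m-j+1)!}$. There is no real difficulty here; the only point requiring care is the off-by-one in the first-row hook lengths (the omitted value $m-2j+1$). Alternatively one can bypass hook lengths entirely: the Specht filtration \eqref{eqn:spechtseries} shows $\binom{m}{j}=\dim\Mperm{j}=\sum_{i=0}^{j}\dim\specht{i}$ over any field, and a one-line induction with telescoping gives the same formula; but the hook length computation is the more self-contained route and is what the statement anticipates.
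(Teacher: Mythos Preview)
Your proof is correct and follows precisely the route the paper indicates: the paper does not give a proof but simply records the result as ``a consequence of the well-known hook length formula \cite[Theorem 20.1]{james},'' and your explicit hook-length computation for the two-row shape $(m-j,j)$ is the natural way to fill in that citation. The bookkeeping on the first-row hooks (the integers $1,\dots,m-j+1$ with $m-2j+1$ omitted) and the final binomial identity are both accurate, and your aside about the alternative telescoping argument via the Specht filtration \eqref{eqn:spechtseries} is also valid.
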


We now describe how we will compute the rank of $W_{2,n}^{1}(m)$.  Let $F$ be a field. The matrix $W_{2,n}^{1}(m)$ represents the $FG_{m}$-module homomorphism 
\[
\rho_{n,2} \colon \Mperm{n} \to \Mperm{2},
\]
so we will compute the dimension of $\im{\rho_{n,2}}$.  As mentioned previously, there is a Specht series 
\[
\Mperm{2}  \supseteq \spechtj{2}{1} \supseteq \spechtj{2}{2} = \specht{2} \supseteq \{0\}
\]
with successive quotients isomorphic to Specht modules.  To be precise, the inclusion $\psi_{2,j}$ maps $\spechtj{2}{j}$ onto $\specht{j}$, and $\ker{\psi_{2,j}} \cap \spechtj{2}{j} = \spechtj{2}{j+1}$.  Following the idea in \cite{jolliffe}, we set $P^{j} \coloneqq \im{\rho_{n,2}} \cap \spechtj{2}{j}$, for $0 \leq j \leq 2$.  Then 
\[
\im{\rho_{n,2}} = P^0 \supseteq P^{1} \supseteq P^{2} \supseteq \{0\}
\]
is a filtration for $\im(\rho_{n,2})$ and by the second isomorphism theorem each quotient $L^{j} \coloneqq P^{j}/P^{j+1}$ is isomorphic to a submodule of $\specht{j}$.  We indicate this situation with the picture
\begin{center}
\begin{tabular}{lll}
  &                     & $L^{0}$ \\
$\im{\rho_{n,2}}$ & $\sim$ & $L^{1}$   \\
  &                     & $L^{2}$
\end{tabular}.
\end{center}


It turns out that if we replace $\rho_{n,2}$ with the inclusion map $\psi_{n,2}$, each of these quotients is either zero or the full Specht module \cite{jolliffe}.  The situation with size-$1$ intersection is a bit more subtle since $L^{j}$ can be a proper submodule of $\specht{j}$.  However, we will be able to work out what is going on with the help of Theorem \ref{thm:james} that describes the composition factors of $\specht{j}$.

We emphasize once more that what Lemma \ref{lem:maps1} tells us is that $j$-polytabloids in $\Mperm{n}$ are mapped by $\rho_{n,2}$ to elements of $\spechtj{2}{j}$ in $\Mperm{2}$.  These images are representatives of elements of $L^{j}$, and we can identify the element by further applying $\psi_{2,j}$.

\section{The Main Result}\label{sec:cases}

Let $2 \leq n \leq m$. Recall that we may assume $2n \leq m$.  Here is the main result of this paper.

\begin{theorem}
Let $2 \leq n \leq \frac{1}{2}m$.  Let $F$ be a field and view the entries of $W_{2,n}^{1}(m)$ as coming from $F$.   The rank of $W_{2,n}^{1}(m)$ is given in the following table:
\begin{center}
\begin{tabular}{l|l}
    Case & Rank \\
    \hline
$\ch{F} =0$, $2n<m$     & $m(m-1)/2$\\
$\ch{F} = 0$, $2n=m$ & $(m-1)(m-2)/2$\\
$\ch{F}=2$, $m$ odd & $m-1$\\
$\ch{F} = 2$, $m$ even, $n$ even & $m-2$\\
$\ch{F} = 2$, $m$ even, $n$ odd & $m-1$\\
$\ch{F} = p>2$, $p \nmid m-2n$, $p \nmid n(m-n)$ & $m(m-1)/2$\\
$\ch{F} = p>2$, $p \nmid m-2n$, $p \mid n(m-n)$ & $(m+1)(m-2)/2$\\
$\ch{F} = p>2$, $p \mid m-2n$, $p \mid m$ & $m(m-3)/2$\\
$\ch{F} = p>2$, $p \mid m-2n$, $p \nmid m$ & $(m-1)(m-2)/2$.
\end{tabular}
\end{center}
\end{theorem}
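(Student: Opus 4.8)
As set up in Section~\ref{sec:calc}, the rank of $W_{2,n}^{1}(m)$ equals $\dim L^{0}+\dim L^{1}+\dim L^{2}$, where each $L^{j}\cong\psi_{2,j}(P^{j})$ is a submodule of $\specht{j}$. The plan is to pin down $\psi_{2,j}(P^{j})$ by trapping it between two computable submodules. From above: $\Mperm{n}$ is generated by a single tabloid $\{t\}$, so $\im\rho_{n,2}=FG_{m}\cdot Y$ with $Y=\rho_{n,2}(\{t\})$ a column of $W_{2,n}^{1}(m)$; hence $\psi_{2,j}(\im\rho_{n,2})=FG_{m}\cdot\psi_{2,j}(Y)$, which Lemma~\ref{lem:maps2} describes explicitly, and $\psi_{2,j}(P^{j})$ is a submodule of it. From below: by Lemma~\ref{lem:maps1}, $\rho_{n,2}$ carries $j$-polytabloids of $\Mperm{n}$ into $\spechtj{2}{j}\cap\im\rho_{n,2}=P^{j}$, and the lemma records the value of $\psi_{2,j}$ on these images, so $\psi_{2,j}(P^{j})$ contains their span. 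In each case below the two bounds coincide. I will also use $\dim\specht{j}=\binom{m}{j}-\binom{m}{j-1}$ (Theorem~\ref{thm:hook}) and the fact that $\specht{1}\subseteq\Mperm{1}$ is precisely the hyperplane of coordinate-sum-zero vectors.

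Assume first $\ch(F)\neq 2$. Lemma~\ref{lem:maps1}(1) gives $\psi_{2,0}(Y)=n(m-n)\emptyset$, so $L^{0}\cong\psi_{2,0}(\im\rho_{n,2})$ equals $S^{(m)}$ if $n(m-n)\neq 0$ in $F$ and equals $\{0\}$ otherwise. Lemma~\ref{lem:maps1}(3) gives $\rho_{n,2}(e_{t}^{2})=2e_{s}$, and as $t$ varies the $e_{s}$ run through a spanning set of polytabloids of $\specht{2}$; since $2$ is a unit, $\specht{2}\subseteq\im\rho_{n,2}$, so $P^{2}=\specht{2}$ and $\dim L^{2}=m(m-3)/2$. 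For $L^{1}$ I split on whether $m-2n$ vanishes in $F$. If $m-2n\neq 0$ in $F$, then by Lemma~\ref{lem:maps1}(2) every element $(m-2n)(\overline{a}-\overline{b})$ lies in $\psi_{2,1}(P^{1})$; as the $\overline{a}-\overline{b}$ ($a\neq b$) span $\specht{1}$, this gives $L^{1}=\specht{1}$ and $\dim L^{1}=m-1$. If $m-2n=0$ in $F$, then $m-n$ and $n$ have the same image in $F$, so Lemma~\ref{lem:maps2}(2) collapses to $\psi_{2,1}(Y)=n\allone$ and $\psi_{2,1}(\im\rho_{n,2})=F\,n\allone$; but $n\allone$ is either $0$ or has nonzero coordinate sum $nm$, hence is not coordinate-sum-zero, so $\psi_{2,1}(P^{1})\subseteq F\,n\allone\cap\specht{1}=\{0\}$ and $\dim L^{1}=0$. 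Summing the three dimensions --- and noting that when $m-2n=0$ in $F$ one has $n(m-n)=n^{2}$ in $F$, which is zero precisely when $p\mid m$ --- recovers the six rows of the table with $\ch(F)\neq 2$.

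Assume now $\ch(F)=2$. Here $\specht{2}$ need not lie in $\im\rho_{n,2}$, so I replace the Specht-series bookkeeping by a factorization. Let $\alpha\colon\Mperm{n}\to\Mperm{1}$ send an $n$-subset to its characteristic vector (the sum of the $1$-subsets it contains), and let $\beta\colon\Mperm{1}\to\Mperm{2}$ be the $FG_{m}$-homomorphism $\overline{x}\mapsto\sum_{b\neq x}\overline{xb}$ (the transpose of the inclusion $\psi_{2,1}$). A direct count of coefficients shows $\beta\alpha=\rho_{n,2}+2\psi_{n,2}$, so in characteristic $2$ we get $\rho_{n,2}=\beta\circ\alpha$, hence $\im\rho_{n,2}=\beta(\im\alpha)$. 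Over $F$ the characteristic vectors of the $n$-subsets span all of $\Mperm{1}$ when $n$ is odd and span the codimension-one submodule $\specht{1}$ when $n$ is even; moreover $\ker\beta=F\allone$, and $\allone$ (of weight $m$) lies in $\specht{1}$ exactly when $m$ is even. Therefore $\dim\im\rho_{n,2}=\dim\im\alpha-\dim(\im\alpha\cap\ker\beta)$ equals $m-1$ if $n$ is odd, $m-2$ if $n$ and $m$ are both even, and $m-1$ if $n$ is even and $m$ is odd --- the three remaining rows.

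The main obstacles are the two degenerate regimes in which the natural generators are annihilated. When $m-2n=0$ in $F$, every polytabloid image that should fill out $L^{1}$ lands in $\ker\psi_{2,1}$, so $\specht{1}$ cannot be built up from below; the resolution is the complementary bound from above --- from cyclicity of $\Mperm{n}$ and Lemma~\ref{lem:maps2} --- which forces $\psi_{2,1}(\im\rho_{n,2})$ to be at most one-dimensional and to meet $\specht{1}$ trivially. When $\ch(F)=2$, the quotient $L^{2}$ can be a proper nonzero submodule of $\specht{2}$ that is not visible from Lemma~\ref{lem:maps1}(3); the factorization $\rho_{n,2}=\beta\alpha$ sidesteps the need to locate it (one could instead try to identify $L^{2}$ inside $\specht{2}$ using the composition-factor data of Theorem~\ref{thm:james}, but this is considerably more delicate). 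Once these points are settled, only the routine arithmetic of summing $\dim L^{0}+\dim L^{1}+\dim L^{2}$ under the various congruence conditions on $m$ and $n$ remains.
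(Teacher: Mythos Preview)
Your proof is correct and follows essentially the same route as the paper: the Specht-series bookkeeping via Lemmas~\ref{lem:maps1} and~\ref{lem:maps2} for $\ch F\neq 2$, and the factorization $\rho_{n,2}=\psi_{1,2}\circ\psi_{n,1}$ through $\Mperm{1}$ for $\ch F=2$. Your execution is slightly more economical in two spots: you merge the characteristic~$0$ and odd-prime analyses into a single case split on whether $m-2n$ and $n(m-n)$ vanish in $F$; and where the paper invokes Theorem~\ref{thm:james} (to rule out $L^{1}\cong D^{(m)}$ when $p\mid m-2n$, $p\nmid m$) and the uniserial structure of $\Mperm{1}$ (in the $\ch F=2$ cases), you instead use only the concrete description of $\specht{1}$ as the coordinate-sum-zero hyperplane together with a direct dimension count $\dim\beta(\im\alpha)=\dim\im\alpha-\dim(\im\alpha\cap\ker\beta)$.
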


We will spend the rest of this section proving the theorem.
  For our first and simplest case we consider:

\subsection{Case: $\ch{F}=0$ and $2n<m$.} \hfil \\

Since all of $n(m-n)$, $m-2n$, and   $2$ are nonzero, we see from Lemma \ref{lem:maps1} that each of the $L^{j}$ contains a polytabloid that generates the full Specht module.  By Theorem \ref{thm:hook} the dimension of $\im{\rho_{n,2}}$ is 
\begin{align*}
&\dim{L^{0}} + \dim{L^{1}} + \dim{L^{2}}\\ &= \dim{S^{(m)}} + \dim{\specht{1}} + \dim{\specht{2}}\\
&= \binom{m}{0} - \binom{m}{-1}+\binom{m}{1} - \binom{m}{0}+\binom{m}{2} - \binom{m}{1}\\
&=\binom{m}{2}\\
&= \frac{m(m-1)}{2}
\end{align*}
and so in this case $W_{2,n}^{1}(m)$ has full rank.

\subsection{Case: $\ch{F}=0$ and $2n=m$.}\hfil \\

A look at Lemma \ref{lem:maps1} shows that $L^{0} \cong S^{(m)}$ and also $L^{2} \cong \specht{2}$.  Let us try to identify $L^{1}$.  From Lemma \ref{lem:maps2} we see that 
\[
L^{1} \cong \psi_{2,1}\left(\im{\rho_{n,2}} \cap \spechtj{2}{1}\right) \subseteq \langle \allone \rangle,
\]
where $\allone$ denotes the all-one vector; that is, the sum of all tabloids.  Since in this case $\specht{1}$ is irreducible, we must in fact have
\[
\psi_{2,1}\left(\im{\rho_{n,2}} \cap \spechtj{2}{1}\right) =  0.
\]
Thus $W_{2,n}^{1}(m)$ has rank 
\[
1 + \binom{m}{2} - \binom{m}{1} = \frac{(m-1)(m-2)}{2}.
\]

\subsection{Case: $\ch{F} = 2$.}\hfil \\

In this case, part $(3)$ of Lemma \ref{lem:maps1} shows that $\spechtj{n}{2}$ is in the kernel of $\rho_{n,2}$.  Thus by considering the Specht series in $\Mperm{n}$ one sees that the composition factors of $\im{\rho_{n,2}}$ must be a sub-multiset of the composition factors of $S^{(m)}$ and $\specht{1}$.  Always we have that $S^{(m)} = D^{(m)}$ is the trivial module and $\specht{1}$ has $\spechtd{1}$ as a top composition factor, but whether or not $\specht{1}$ contains $D^{(m)}$ as a composition factor will depend on the parity of $m$.

\subsubsection{Case: $m$ even.}\hfil \\

By Theorem \ref{thm:james}, we know that $\specht{1}$ has $\{\spechtd{1}, D^{(m)}\}$ as its set of composition factors.  Notice that this implies that $\dim \spechtd{1} = \binom{m}{1} - \binom{m}{0} - 1$.
Thus in this case the composition factors of $\im{\rho_{n,2}}$ are a sub-multiset of 
\[
\{D^{(m)},\spechtd{1}, D^{(m)}\}.
\]

In fact, one can deduce (\cite[Chapter $5$, Example $1$]{james}) that $\Mperm{1}$ is uniserial with unique composition series
\[
\Mperm{1} \supset \specht{1} \supset \langle \allone \rangle \supset \{0\}
\]
and corresponding composition factors $D^{(m)}, \spechtd{1}, D^{(m)}$.  We will make use of this very understandable submodule structure of $\Mperm{1}$ in the following way.  Since $\ch{F} = 2$, it is easy to see that
\[
\rho_{n,2} = \psi_{1,2} \circ \psi_{n,1},
\]
where $\psi_{1,2}$ sends a $1$-subset to the sum of the $2$-subsets containing it.  This means that $\im{\rho_{n,2}}$ is isomorphic to a subquotient of $\Mperm{1}$.

The image of $\psi_{n,1} \colon \Mperm{n} \to \Mperm{1}$ is easily identified with the same technique we have been applying to $\rho_{n,2}$; namely, intersect $\im{\psi_{n,1}}$ with the Specht series in $\Mperm{1}$ and focus on the successive quotients.  The picture is
\begin{center}
\begin{tabular}{lll}
$\im{\psi_{n,1}}$ & $\sim$ & $N^{0}$   \\
  &                     & $N^{1}$
\end{tabular}.
\end{center}

This is done in \cite{jolliffe} for all the inclusion maps, and always the quotients are either zero or the entire Specht module.  The specific result applied to this case is that $N^{j} \cong \specht{j}$ precisely when $\binom{n-j}{1-j}$ is odd.  Thus the answer is this case depends on the parity of $n$.

\paragraph{Case: $n$ even.} \hfil \\

  Since $n$ is even the discussion above shows that  $\im{\psi_{n,1}}$ is $\specht{1}$.

Furthermore, it is easy to see that $\allone$ (which generates  $D^{(m)}$ in $\specht{1}$) is in the kernel of $\psi_{1,2}$.  It follows that $\im{\rho_{n,2}} \cong \spechtd{1}$, and so in this case the rank of $W_{2,n}^{1}(m)$ is
\[
\binom{m}{1} - \binom{m}{0} - 1 = m-2.
\]

\paragraph{Case: $n$ odd.} \hfil \\

In this case we see that $\psi_{n,1} \colon \Mperm{n} \to \Mperm{1}$ is surjective.  Again $\allone \in \ker{\psi_{1,2}}$, and so we see that the composition factors of $\im{\rho_{n,2}}$ are a subset of $\{D^{(m)}, \spechtd{1}\}$.  We pick up $D^{(m)}$ as a top composition factor of $\im{\rho_{n,2}}$ from Lemma \ref{lem:maps1} part $(1)$.  Clearly $G_{m}$ does not act trivially on $\im{\rho_{n,2}}$, so we must pick up $\spechtd{1}$ as a composition factor as well.  Therefore in this case the rank of $W_{2,n}^{1}(m)$ is
\[
\binom{m}{1}-\binom{m}{0} = m-1.
\]

\subsubsection{Case: $m$ odd.}\hfil \\

By Theorem \ref{thm:james}, we have $\specht{1} = \spechtd{1}$.  Thus the composition factors of $\im{\rho_{n,2}}$ form a subset of $\{ D^{(m)}, \spechtd{1}\}$.   As in the previous case, 
\[
\rho_{n,2} = \psi_{1,2} \circ \psi_{n,1},
\]
and so $\im{\rho_{n,2}}$ is a subquotient of $\Mperm{1}$.  Under the current hypotheses, it is easily deduced  (\cite[Chapter $5$, Example $1$]{james}) that 
\[
\Mperm{1} = \langle \allone \rangle \oplus \specht{1}.
\]
Since $\allone \in \ker{\psi_{1,2}}$, and $\rho_{n,2} \neq 0$, we must have $\im{\rho_{n,2}} \cong \spechtd{1}$.

So in this case the rank of $W_{2,n}^{1}(m)$ is 
\[
\binom{m}{1} - \binom{m}{0} = m-1.
\]

\subsection{Case: $\ch{F} = p$, $p>2$.}\hfil \\

In these remaining cases we see from Lemma \ref{lem:maps1} part $(3)$ that $\im{\rho_{n,2}}$ contains the Specht module $\specht{2}$; that is, $L^{2} \cong \specht{2}$.  

\subsubsection{Case: $p \nmid m-2n$ and $p \nmid n(m-n)$.}\hfil \\

From Lemma \ref{lem:maps1} we see that $L^{0}$ and $L^{1}$ are also isomorphic to the full Specht modules.  Thus in this case the rank of $W_{2,n}^{1}(m)$ is 
\[
\binom{m}{2} = \frac{m(m-1)}{2}.
\]

\subsubsection{Case: $p \nmid m-2n$ and $p \mid n(m-n)$.}\hfil \\

By Lemma \ref{lem:maps1} we see that $L^{0} = \{0\}$ and $L^{1}$ is isomorphic to the full Specht module.  We have that the rank of $W_{2,n}^{1}(m)$ equals
\[
\binom{m}{2} - 1 = \frac{(m+1)(m-2)}{2}.
\]

\subsubsection{Case: $p \mid m-2n$.}\hfil \\

This rank in this case will depend on whether or not $p$ divides $m$.

\paragraph{Case:  $p \mid m$.} \hfil \\

We must have that $p \mid n$ as well, so by Lemma \ref{lem:maps1} part $(1)$ $L^{0} = \{0\}$.  Part $(2)$ of Lemma \ref{lem:maps2} shows that $\psi_{2,1}$ kills $\im{\rho_{n,2}}$, so we have $L^{1} = \{0\}$.  Thus the rank of $W_{2,n}^{1}(m)$ is
\[
\binom{m}{2} - \binom{m}{1} = \frac{m(m-3)}{2}.
\]

\paragraph{Case:  $p \nmid m$.} \hfil \\

This implies that $p \nmid n$, so by Lemma \ref{lem:maps1} part $(1)$ we see that $L^{0} \cong D^{(m)}$.  By Lemma \ref{lem:maps2} part $(2)$ we see that $\psi_{2,1}$ maps $\im{\rho_{n,2}}$ into $\langle \allone \rangle$.  Thus we must have that $L^{1} = \{0\}$ or $L^{1} \cong D^{(m)}$.  But Theorem \ref{thm:james} shows that $D^{(m)}$ is not a composition factor of $\specht{1}$ in this case.  So we have $L^{1} = \{0\}$, and the rank of $W_{2,n}^{1}(m)$ is
\[
\binom{m}{2} - \binom{m}{1} + 1 = \frac{(m-1)(m-2)}{2}.
\]

\section{General subset intersection matrices}\label{sec:conclude}
We conclude with some observations about the general incidence matrix $W_{k,n}^{i}(m)$ mentioned in the introduction.  This matrix represents the map $\tau_{n,k}^{i} \colon \Mperm{n} \to \Mperm{k}$ which sends an $n$-subset of $\XX$ to the sum of the $k$-subsets that intersect it in a set of size $i$.  Our often used Lemma \ref{lem:maps1} readily generalizes to this situation.

\begin{lemma}\label{lem:maps3}
Let $0 \leq i \leq k \leq n \leq \frac{1}{2}m$.  Let $t$ be an $(m-n,n)$-tableau.  Then, for $j \leq k$, $\tau_{n,k}^{i}$ maps any $j$-polytabloid $e_{t}^{j}$ in $\Mperm{n}$ into $\spechtj{k}{j}$.  Furthermore,
\[
\psi_{k,j}\left(\tau_{n,k}^{i}(e_{t}^{j})\right) = \left(\sum_{\ell \geq 0} (-1)^{j-\ell} \binom{j}{\ell} \binom{n-j}{i-\ell} \binom{m-n-j}{k-i-j+\ell}\right) e_{s},
\]
where $s$ is the $(m-j,j)$-tableau obtained from $t$ by moving the last $n-j$ entries of the second row of $t$ to the end of the first row of $t$.
\end{lemma}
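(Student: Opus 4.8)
The plan is to mimic the proof of Lemma~\ref{lem:maps1}, of which this is the natural generalization. Fix the tableau $t$ and write its second row as $a_1,\dots,a_j,c_1,\dots,c_{n-j}$ and its first row as $b_1,\dots,b_j,d_1,\dots,d_{m-n-j}$, so that $C_t^j$ is generated by the transpositions $(a_\ell\,b_\ell)$ for $1\le\ell\le j$ and $\kappa_t^j=\prod_{\ell=1}^j\bigl(1-(a_\ell\,b_\ell)\bigr)$. With this notation the tableau $s$ of the statement is the $(m-j,j)$-tableau with second row $a_1,\dots,a_j$ and first row $b_1,\dots,b_j,d_1,\dots,d_{m-n-j},c_1,\dots,c_{n-j}$, so that $\kappa_s=\kappa_t^j$ as elements of $FG_m$.

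First I would establish the containment. We have $\tau_{n,k}^{i}(e_t^j)=\kappa_t^j\sum_{|\{T\}\cap\{t\}|=i}\{T\}$, the sum being over $k$-subsets $T$ of $\XX$. Since $\bigl(1-(a_\ell\,b_\ell)\bigr)$ annihilates any tabloid in which $a_\ell$ and $b_\ell$ lie in the same row, the only surviving terms are those $T$ with $|T\cap\{t\}|=i$ that meet each pair $\{a_\ell,b_\ell\}$ in exactly one point. For such a $T$, let $u=u(T)$ be the $(m-k,k)$-tableau representing $\{T\}$ whose $\ell$-th column is $\{a_\ell,b_\ell\}$ with the element of $T$ in the bottom position; then $\kappa_t^j=\kappa_u^j$, so $\kappa_t^j\{T\}=\kappa_u^j\{u\}=e_u^j$. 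Hence $\tau_{n,k}^{i}(e_t^j)=\sum_{T}e_{u(T)}^j\in\spechtj{k}{j}$, which is the first assertion.

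The second assertion follows by applying $\psi_{k,j}$ term by term. The key sub-computation is that for any $(m-k,k)$-tableau $u$ and any $j\le k$ one has $\psi_{k,j}(e_u^j)=e_{u'}$, where $u'$ is obtained from $u$ by appending the last $k-j$ entries of the second row of $u$ to the end of its first row. To prove this, expand $e_u^j=\sum_{R\subseteq\{1,\dots,j\}}(-1)^{|R|}\{u_R\}$ (swapping the columns indexed by $R$), apply $\psi_{k,j}$, and observe that for a fixed $(m-j,j)$-tabloid $\{V\}$ the total coefficient is a signed count over a subcube of $\{1,\dots,j\}$; this sum vanishes unless the subcube is a single point, which forces $V$ to be a transversal of the first $j$ columns of $u$ containing no entry beyond them, and in that case the sign matches the sign with which $\{V\}$ occurs in $e_{u'}$. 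Applying this to each $u=u(T)$: the tableau $u'(T)$ agrees with $s$ except that in the columns $\ell$ with $b_\ell\in T$ the two entries are interchanged, so $e_{u'(T)}=(-1)^{\,j-|P_T|}e_s$ where $P_T=\{\ell: a_\ell\in T\}$. Therefore $\psi_{k,j}\bigl(\tau_{n,k}^{i}(e_t^j)\bigr)=\bigl(\sum_{T}(-1)^{\,j-|P_T|}\bigr)e_s$, and grouping the surviving $T$'s by the size $\ell=|P_T|$ — choose which $\ell$ of the $j$ pairs contribute their $a$-entry, then $i-\ell$ of the $n-j$ symbols $c_\bullet$, then the remaining $k-i-j+\ell$ of the $m-n-j$ symbols $d_\bullet$ — yields exactly $\sum_{\ell\ge 0}(-1)^{\,j-\ell}\binom{j}{\ell}\binom{n-j}{i-\ell}\binom{m-n-j}{k-i-j+\ell}$.

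I expect the main obstacle to be the bookkeeping in the sub-computation $\psi_{k,j}(e_u^j)=e_{u'}$: one must check that every ``spurious'' tabloid appearing after expansion — those containing an entry from outside the first $j$ columns, or both entries of some column — cancels, which is precisely the vanishing of an alternating sum over a nonempty subcube, and one must keep the sign $(-1)^{\,j-|P_T|}$ straight when rewriting $e_{u'(T)}$ as a multiple of $e_s$ using that a column transposition of $s$ acts on $e_s$ by $-1$. The counting of the surviving $T$'s is then routine, and the whole formula can be sanity-checked against the three parts of Lemma~\ref{lem:maps1}, which are the cases $k=2$, $i=1$, $j=0,1,2$.
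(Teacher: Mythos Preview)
Your proof is correct and follows essentially the same route as the paper's: compute $\tau_{n,k}^{i}(e_t^j)=\kappa_t^j\sum\{T\}$, observe that only transversals of the first $j$ columns survive, rewrite each surviving term as a $j$-polytabloid $e_{u(T)}^{j}$, then push through $\psi_{k,j}$ to obtain $(-1)^{j-\ell}e_s$ and count by $\ell$. The only difference is that you spell out the identity $\psi_{k,j}(e_u^j)=e_{u'}$ via the subcube cancellation argument, whereas the paper simply asserts that $\psi_{k,j}$ sends each $e_r^j$ to $\pm e_s$ with the stated sign.
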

\begin{proof}
Assume the hypotheses of the lemma and let $t$ be an $(m-n,n)$-tableau.  Then
\[
e_{t}^{j} \xmapsto{\tau_{n,k}^{i}} \kappa_{t}^{j}\sum_{\vert \{s\} \cap \{t\}\vert = i}\{s\}.
\]
The terms in the sum above are $k$-subsets $\{s\}$ that meet $\{t\}$ in $i$ elements of the second row of $t$; thus the other $k-i$ elements are chosen from the first row of $t$.  The terms in the sum that are not killed by $\kappa_{t}^{j}$ are precisely those $k$-subsets that contain exactly one entry from each of the first $j$ columns of $t$.  We may classify such a $k$-subset by how many of those entries from the first $j$ columns of $t$ came from the second row.  Call this number $\ell$.  Then we see the number of terms in the sum not killed by $\kappa_{t}^{j}$ is
\[
\sum_{\ell \geq 0} \binom{j}{\ell} \binom{n-j}{i-\ell} \binom{m-n-j}{k-i-j+\ell}.
\]
Each $k$-subset that is such a term in the sum may be represented by an $(m-k,k)$-tableau $r$ that has the same first $j$ columns as $t$, where the entries of each of the first $j$ columns may be transposed (so that elements of the $k$-subset are in the second row).  Applying $\kappa_{t}^{j}$ to each of these $k$-subsets $\{r\}$ we in fact then have a sum of $j$-polytabloids $e_{r}^{j}$ in $\spechtj{k}{j}$. 

Finally, the inclusion map $\psi_{k,j}$ now sends any such $j$-polytabloid $e_{r}^{j}$ to $\pm e_{s}$, where $s$ is the $(m-j,j)$-tableau described in the statement of the lemma.  The sign depends only on the number of transpositions $j-\ell$ that change the first $j$ columns of $r$ into those of $t$.  The lemma follows.
\end{proof}

Using this lemma we could begin a similar analysis of any of the subset incidence matrices as we did for $W_{2,n}^{1}(m)$.  We can also get some easy results such as: 
\begin{corollary}
Let $0 \leq i \leq k \leq n \leq \frac{1}{2}m$.  If $\ch{F}=0$, or $\ch{F}=p$ with $p \nmid \binom{k}{i}$, then the matrix $W_{k,n}^{i}(m)$ has rank at least
\[
\binom{m}{k} - \binom{m}{k-1}.
\]
\end{corollary}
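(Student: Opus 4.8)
The plan is to show that $\im{\tau_{n,k}^{i}}$ contains the full Specht module $\specht{k}$; this immediately yields the stated bound, since $\dim \specht{k} = \binom{m}{k} - \binom{m}{k-1}$ by Theorem \ref{thm:hook}. This is the same mechanism that appears in the proof of the main theorem in the case $\ch{F} = p > 2$, $p \mid m-2n$, where the conclusion $L^{2} \cong \specht{2}$ was obtained by exhibiting a unit multiple of a polytabloid in the image.

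The one computation required is to specialize Lemma \ref{lem:maps3} to the case $j = k$. For an $(m-n,n)$-tableau $t$ the lemma then describes the image of the $k$-polytabloid $e_{t}^{k}$, which lies in $\specht{k}$; since the inclusion map $\psi_{k,k}$ is simply the identity map on $\Mperm{k}$, no information is lost, and
\[
\tau_{n,k}^{i}(e_{t}^{k}) = \left(\sum_{\ell \geq 0} (-1)^{k-\ell}\binom{k}{\ell}\binom{n-k}{i-\ell}\binom{m-n-k}{\ell-i}\right) e_{s},
\]
where $s$ is the $(m-k,k)$-tableau obtained from $t$ by moving the last $n-k$ entries of the second row to the end of the first row. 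Here $m-n-k \ge 0$ because $k \le n \le \frac{1}{2}m$, so $\binom{m-n-k}{\ell-i}$ vanishes unless $\ell \ge i$, while $\binom{n-k}{i-\ell}$ vanishes unless $\ell \le i$; hence only the term $\ell = i$ survives, leaving $\tau_{n,k}^{i}(e_{t}^{k}) = (-1)^{k-i}\binom{k}{i}\,e_{s}$.

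To finish, note that the scalar $(-1)^{k-i}\binom{k}{i}$ is a unit in $F$ precisely under the hypothesis $\ch{F}=0$ or $p \nmid \binom{k}{i}$. Thus, for a suitable $(m-n,n)$-tableau $t$, the image of $\tau_{n,k}^{i}$ contains a nonzero scalar multiple of a polytabloid $e_{s}$. Since $\tau_{n,k}^{i}$ is an $FG_{m}$-homomorphism, $\im{\tau_{n,k}^{i}}$ is a submodule of $\Mperm{k}$, and as the Specht module $\specht{k}$ is generated by any one of its polytabloids, we conclude $\specht{k} \subseteq \im{\tau_{n,k}^{i}}$. Therefore the rank of $W_{k,n}^{i}(m)$ equals $\dim \im{\tau_{n,k}^{i}} \ge \binom{m}{k} - \binom{m}{k-1}$, as claimed.

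I do not expect any genuine obstacle here. The only point requiring a moment's care is the collapse of the triple-binomial sum of Lemma \ref{lem:maps3} to its single term $\ell = i$, which is immediate once one records the supports of the two inner binomial coefficients; everything else is formal, so the content of the corollary is essentially inherited wholesale from Lemma \ref{lem:maps3}.
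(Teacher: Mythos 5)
Your proposal is correct and follows essentially the same route as the paper: specialize Lemma \ref{lem:maps3} to $j=k$, observe the coefficient collapses to $\pm\binom{k}{i}$, and conclude that $\im{\tau_{n,k}^{i}}$ contains $\specht{k}$ since a Specht module is generated by any single polytabloid. Your explicit verification that only the $\ell=i$ term survives is a detail the paper leaves implicit, but the argument is the same.
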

\begin{proof}
Let $t$ be an $(m-n,n)$-tableau.  Applying Lemma \ref{lem:maps3} to the case when $j=k$, we see that the $k$-polytabloid $e_{t}^{k}$ is mapped by the $FG_{m}$-module homomorphism $\tau_{n,k}^{i}$ to $\pm\binom{k}{i}e_{s}$, where the tableau $s$ is described in the lemma.  Thus, if $\binom{k}{i} \neq 0$, $\im{\tau_{n,k}^{i}}$ contains $\specht{k}$ and the corollary follows. 
\end{proof}

We leave the reader with a final remark, which we hope will encourage more study of these problems through the lens of representation theory.  Notice that for the inclusion relation, where $i=k$, the coefficients of $e_{s}$ in Lemma \ref{lem:maps3} are
\[
\binom{n-j}{k-j},\]
for $0 \leq j \leq k$.  These same numbers appear in Wilson's diagonal form for the inclusion matrices, with multiplicities equal to Specht module dimensions.  Furthermore, the coefficients of $e_{s}$ appearing in the case of $1$-intersection of $2$-subsets vs. $n$-subsets, which can be read from Lemma \ref{lem:maps1} (or extracted from Lemma \ref{lem:maps3})  are very similar to the numbers appearing in the diagonal form for $W_{2,n}^{1}(m)$ given in Theorem 17 of \cite{wong}.  Looking at various examples, one can check that these coefficients (with multiplicities equal to Specht module dimensions) do in fact give an alternative diagonal form for this matrix in some cases (but not all).

%

\bibliographystyle{amsplain}
\bibliography{subset_bib}
\end{document}